\newtheorem{thm}{Theorem}[section]
\newtheorem{prop}[thm]{Proposition}
\newtheorem{df}[thm]{Definition}
\newtheorem{lem}[thm]{Lemma}
\newtheorem{cor}[thm]{Corollary}
\def\diam{\text{\rm diam}}
\def\over{{ \rm \overline{mdim}}_M(\phi,X,d)}
\def\under{ {\rm\underline{mdim}}_M(\phi,X,d)}
\def \logf{\log \frac{1}{\epsilon}}
\def\ov{\overline{\rm mdim}_M(\phi_1,X,d)}
\def\i{\mu \in \mathcal{M}_{\phi}(X)}
\def\e{\mu \in \mathcal{E}_{\phi}(X)}
\def\ergodic{\mu \in \mathcal{E}_{\phi_1}(X)}
\numberwithin{equation}{section}
\title{Metric mean dimension of flows}
\author{Rui Yang, Ercai Chen and Xiaoyao Zhou*
}
\address%[authorlabel1]
{1.School of Mathematical Sciences and Institute of Mathematics, Nanjing Normal University, Nanjing 210023, Jiangsu, P.R.China}
\email{zkyangrui2015@163.com}
\email{ecchen@njnu.edu.cn}
\email{zhouxiaoyaodeyouxian@126.com}
\date{}
\begin{document}
\thispagestyle{empty}
\renewcommand{\thefootnote}{}
\footnote{2020 \emph{Mathematics Subject Classification}:37C45, 37D35}
\footnotetext{\emph{Key words and phrases}:  Continuous flow; Metric mean dimension; Variational principle.}
\footnote{*corresponding author}

\begin{abstract}
The present paper aims to investigate the  metric mean dimension theory of  continuous flows.  We introduce  the notion of metric mean dimension  for continuous flows to characterize the complexity of flows with infinite topological entropy.   For continuous flows,  we establish variational principles for metric mean dimension in terms of  local $\epsilon$-entropy function  and  Brin-Katok $\epsilon$-entropy; For  a class of special  flow, called uniformly Lipschitz  flow, we  establish variational principles for metric mean dimension in terms of Kolmogorov-Sinai $\epsilon$-entropy, Brin-Katok's $\epsilon$-entropy and  Katok's $\epsilon$-entropy. 
\end{abstract}
\maketitle

\section{Introduction}

By a pair $(X,\phi)$ we mean  a   \emph{continuous flow}, where   $X$ is a   compact metrizable  topological space  $X$ with a metric  $d$, $ \phi: X \times \mathbb{R} \rightarrow X$  is   a continuous mapping so that $\phi_{t+s}=\phi_t\circ \phi_s$ for all $t,s \in \mathbb{R}$ and $\phi_t(x):=\phi(x,t)$ denotes  the  homeomorphism on $X$. Let $\mathcal{M}(X)$ denote the set of  Borel probability measures on $X$.   Given  $t \in \mathbb{R}$,  for discrete topological dynamical system $(X,\phi_t)$ the sets of \emph{$\phi_t$-invariant}, \emph{$\phi_t$-ergodic} Borel probability measures on $X$ are denoted by  $\mathcal{M}_{\phi_t}(X)$, $\mathcal{E}_{\phi_t}(X)$, respectively.   A Borel probability  measure $\mu$  on $X$  is said to be \emph{$\phi$-invariant} if $\mu$ is $\phi_t$-invariant for all $t\in \mathbb{R}$. A $\phi$-invariant measure  is said to be \emph{$\phi$-ergodic} if  any  Borel measurable set $B$ with  $\phi_t(B)=B$  for all $t\in \mathbb{R}$  has  measure 0 or 1.  By $\mathcal{M}_{\phi}(X), \mathcal{E}_{\phi}(X)$  we denote the sets of   $\phi$-invariant probability measures, $\phi$-ergodic probability measures, respectively.

In 1999, Gromov \cite{gromov}  introduced   a new topological invariant called  Mean  dimension  for topological dynamical systems. Since then, mean dimension  has confirmed  a powerful tool  solving the embedding problems of dynamical systems \cite{lw00,g15,glt16,g17,gt20}. Later,  Lindenstrauss and Weiss \cite{lw00} introduced  metric mean dimension,  and showed  that metric mean dimension is an upper bound of mean dimension.   It turns out that metric mean dimension is a useful quantity to  characterize the topological complexity of infinite entropy systems.  As the classical variational principle  \cite{w82}  shown,  it is variational principle that  bridges the  ergodic theory and topological dynamics. An important question is how to inject ergodic theoretic ideas into mean dimension theory  by establishing  some new variational principles for metric mean dimension.  In 2018,  Lindenstrauss and Tsukamoto's \cite{lt18} pioneering work shows that there exists variational principle for metric mean dimension  in terms of rate-distortion functions that comes  from information theory.  Changing the candidate rate-distortion functions, Lindenstrauss-Tsukamoto variational principles are still valid \cite{vv17,gs20,shi,wu21}.  
 
The present   paper focus on metric mean dimension  theory of  continuous flows.  On the one hand, there exists intrinsical differences  between the ergodic theory of flow and its discrete samples. For instances, in general an invariant  probability measure   for time one map is not invariant for flow,  and an ergodic probability  measure  for flow is  not necessarily ergodic for time one map. Hence one can not directly derive  variational principle for metric mean dimension of flows based the previous work.  On the other hand,  Abramov entropy formulas \cite{a59} show that   both topological entropy and measure-theoretic entropy  of  discrete samples $(\phi_t)_{t\in \mathbb{R}}$ are   equal to the absolute value of $t$ times their corresponding  entropies of $\phi_1$, yet for  different discrete samples of flow,   the  metric mean dimensions of the phase space and the measure-theoretic metric mean dimensions of invariant measures (defined by measure-theoretic $\epsilon$-entropies) may allow different ``speeds" to  approximate the infinite entropy along with the time. The  two obstacles lead to   some  significant difficulties when establishing variational principles for  metric mean dimension of  continuous flows.  
  
To overcome the first obstacle, we  introduce the notion of local $\epsilon$-entropy  function  of flows inspired by  \cite{yz07,shi} and establish a variational principle for metric mean dimension in terms of the local $\epsilon$-entropy  function of flows.  
 \begin{thm}\label{thm 1.1}
 Let $(X,\phi)$ be a  continuous  flow with a metric $d$. Then 
 \begin{align*}
 \overline{\rm mdim}_M(\phi,X, d)&=\limsup_{\epsilon \to 0}\frac{1}{\log \frac{1}{\epsilon}} \sup_{x\in X} h_d (x,\epsilon,\phi)\\
 \underline{\rm mdim}_M(\phi,X, d)&=\liminf_{\epsilon \to 0}\frac{1}{\log \frac{1}{\epsilon}} \sup_{x\in X} h_d (x,\epsilon,\phi).
 \end{align*}
  \end{thm}
  
Besides,  motivated by the work \cite{b73,fh12,w21,cls21,ycz22} we introduce the notion of  Bowen  metric mean dimension  on  subsets of continuous flows, which  allows us to establish variational principle for Bowen metric mean dimension for compact subsets  in terms of  Brin-Katok  $\epsilon$-entropy and then extend the variational principles  to  the whole phase space. 
    
\begin{thm}\label{thm 1.2}
Let $(X,\phi)$ be a  continuous  flow with a metric $d$. Then 
\begin{align*}
\overline{\rm mdim}_M(\phi,X, d)
&=\limsup_{\epsilon \to 0}\frac{1}{\log \frac{1}{\epsilon}} \sup_{\mu \in \mathcal{M}(X)}\underline{h}_\mu^{BK}(\phi,\epsilon)\\
&=\limsup_{\epsilon \to 0}\frac{1}{\log \frac{1}{\epsilon}} \sup_{\mu \in \mathcal{M}(X)}\overline{h}_\mu^{BK}(\phi,\epsilon),
\end{align*}
The result is also true for $\underline{\rm mdim}_M(\phi,X, d)$ by changing $\limsup_{\epsilon \to 0}$ into $\liminf_{\epsilon \to 0}$.
 \end{thm}
 
 To overcome the second obstacle,  we   need an  auxiliary condition on the flow to offset the differences  in the  topological and measure-theoretic aspects  for  continuous flows and its discrete samples. In  this case,  we can establish variational principles for metric mean dimension of a  class  of special  continuous flow called  uniformly Lipschitz flows.

\begin{thm}\label{thm 1.3}
Let $(X,\phi)$ be a uniformly Lipschitz flow with a metric $d$.  Then for every $F(\mu,\epsilon)\in \mathcal{D}$
\begin{align*}
\over&=\limsup_{\epsilon \to 0}\frac{1}{\logf}\sup_{\mu \in \mathcal{M}_{\phi}(X)}F(\mu,\epsilon)\\
&=\limsup_{\epsilon \to 0}\frac{1}{\logf}\sup_{\mu \in \mathcal{E}_{\phi}(X)}F(\mu,\epsilon),
\end{align*}
where $F(\mu,\epsilon)$ is chosen from  the candidate set
$$\mathcal{D}=\left\{\inf_{\diam (P) \leq \epsilon}\limits h_\mu(\phi_1, {P}), 
\overline{h}_\mu^{BK}(\phi_1,\epsilon),\underline{h}_{\mu}^K(\epsilon, \delta,\phi_1),\overline{h}_{\mu}^K(\phi_1,\epsilon, \delta)\right\}.$$
The result is also true for $\underline{\rm mdim}_M(\phi,X, d)$ by changing $\limsup_{\epsilon \to 0}$ into $\liminf_{\epsilon \to 0}$.
 \end{thm}

 We  remark that   if the scale function $S=\logf$ and the potential function $f=0$, then   the condition A  can be  removed in  \cite[Theorem C]{cl21} and the supremum can only  range over all  ergodic measures of $\phi$ by Theorem \ref{thm 1.3}.

The rest of this paper is organized as follows. In section 2,  we introduce the notions of  the metric mean dimension of flows  and derive some elementary properties.  In section 3,  we  give the proof of  Theorems \ref{thm 1.1}, \ref{thm 1.2} and  \ref{thm 1.3}.

\section{Preliminary}

%\subsection{Metric mean dimension of  flows}

In this section, we  introduce the notion of metric mean dimension  of continuous flows defined by spanning sets and separated sets,  and derive some elementary properties related with metric mean dimension, including the classical Lindenstrauss-Weiss inequality and Abramov type  formula of metric mean dimension.

 Let  $t\in \mathbb{R}$, $n\in \mathbb{N}$ and $x,y \in X$.  The \emph{$t$-th Bowen metric  for flow $\phi$}, \emph{$n$-th Bowen metric for time $t$-map $\phi_t$}  are respectively given by 
\begin{align*}
d_t(x,y):&=\max_{s\in [0,t]} d(\phi_s x, \phi_s y),\\
d_{n,\phi_t}(x,y):&=\max_{ j\in \{0,...,n-1\}} d(\phi_{tj}x,\phi_{tj} y).
\end{align*}
Then the  $(t,\epsilon,\phi)$-ball of $x$ and   the  $(n,\epsilon,\phi_t)$-ball of $x$  are respectively defined  by  
\begin{align*}
B_t(x,\epsilon,\phi)&=\{y\in X: d_t(x,y)<\epsilon\},\\
B_n(x,\epsilon,\phi_t)&=\{y\in X: d_{n,\phi_t}(x,y)<\epsilon\}.
\end{align*}
Clearly,  both sets $B_t(x,\epsilon,\phi)$ and  $B_n(x,\epsilon,\phi_t)$ are open due to the continuity of $\phi$.

Fix  a non-empty subset $Z\subset X$ and $\epsilon >0$. A set $E\subset X$ is  \emph{a $(t,\epsilon)$-spanning set of $Z$} if  for any $x \in Z$, there  exists  $y\in E$ such that $d_t(x,y)<\epsilon.$ The smallest cardinality of $(t,\epsilon)$-spanning set of $Z$  is denoted by $r_t(\phi,Z,d,\epsilon)$. A set $F\subset Z$ is  \emph{a $(t,\epsilon)$-separated set of $Z$} if $d_t(x,y)\geq\epsilon$ for any $x,y \in F$ with $x\not= y$. The  largest  cardinality  of $(t,\epsilon)$-separated set of $Z$  is denoted by $s_t(\phi,Z, d,\epsilon)$.  Put
$$r(\phi, Z,d,\epsilon)=\limsup_{t\to \infty} \frac{1}{t} \log r_t(\phi,Z,d,\epsilon)$$
and
$$s(\phi, Z,d,\epsilon)=\limsup_{t\to \infty} \frac{1}{t} \log s_t(\phi,Z,d,\epsilon).$$
 By a standard method \cite{w82}, we  have $r(\phi, X,d,\epsilon)\leq s(\phi, X,d,\epsilon) \leq r(\phi, X,d,\frac{\epsilon}{2}).$

\begin{df}
Let $(X,\varphi)$ be a  continuous flow  with a metric $d$. The  upper  and lower metric mean dimensions of  $X$  for flow  $\phi$  are defined by 
\begin{align*}
\over&=\limsup_{\epsilon\to 0}\frac{r(\phi,X,d,\epsilon)}{\log \frac{1}{\epsilon}}=\limsup_{\epsilon\to 0}\frac{s(\phi,X,d,\epsilon)}{\log \frac{1}{\epsilon}},\\
\under&=\liminf_{\epsilon\to 0}\frac{r(\phi,X,d,\epsilon)}{\log \frac{1}{\epsilon}}=\liminf_{\epsilon\to 0}\frac{s(\phi,X,d,\epsilon)}{\log \frac{1}{\epsilon}}.\\
\end{align*}
\end{df}
For time $t$-map $\phi_t$, using $d_{n,\phi_t}$ metric one can similarly define  the quantities $r(\phi_t, X,d,\epsilon),$ $ s(\phi_t, X,d,\epsilon)$, and    upper and lower metric mean dimensions  $\overline{\rm{mdim}}_M(\phi_t, X,d)$,  $\underline{\rm{mdim}}_M(\phi_t, X,d)$, respectively.

Obviously, metric mean dimension for flows  depends on the metric on $X$ and hence are not topological invariant.  Recall that Bowen and Ruelle \cite{br75}  defined  topological entropy   of   continuous flows as 
$$h_{top}(\phi,X)=\lim\limits_{\epsilon \to 0}r(\phi, X,d,\epsilon)=\sup_{\epsilon>0}r(\phi, X,d,\epsilon).$$
Notice that  the continuous flows have zero  metric mean dimension  if its topological entropy is finite. Hence  metric mean dimension  for flows is a useful quantity to characterize  flow admitting   infinite topological entropy. 

We  proceed to  derive some basic properties of  metric mean dimension of continuous flows.
 \begin{prop}
Let $(X,\phi)$ be a continuous flow with a metric $d$. Then  for every $\tau >0$
 \begin{align*}
\over=\limsup_{\epsilon\to 0}\limsup_{n\to \infty}\frac{r_{n\tau}(\phi,X,d,\epsilon)}{n\tau\log\frac{1}{\epsilon}}\\
\under=\liminf_{\epsilon\to 0}\limsup_{n\to \infty}\frac{r_{n\tau}(\phi,X,d,\epsilon)}{n\tau \log\frac{1}{\epsilon}}.
 \end{align*}
\end{prop}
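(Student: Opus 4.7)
The plan is to sandwich $r_t(\phi,X,d,\epsilon)$ between two quantities whose inner limits in $t$ are already known to exist (thanks to Proposition 2.3), then exploit the fact that after normalizing by $\log(1/\epsilon)$ the shift $\epsilon\mapsto 2\epsilon$ or $\epsilon\mapsto\epsilon/2$ disappears under the outer $\limsup_{\epsilon\to 0}$ (resp.\ $\liminf_{\epsilon\to 0}$). Proposition 2.3 already tells us that the inner limit $\lim_{t\to\infty}\tfrac{1}{t}\log\#(\phi,X,d_t,\epsilon)$ exists by subadditivity, and that
\[
\over=\limsup_{\epsilon\to 0}\lim_{t\to\infty}\frac{\log\#(\phi,X,d_t,\epsilon)}{t\log\frac{1}{\epsilon}},\qquad \under=\liminf_{\epsilon\to 0}\lim_{t\to\infty}\frac{\log\#(\phi,X,d_t,\epsilon)}{t\log\frac{1}{\epsilon}},
\]
so the whole game is to replace $r_t$ by these cover-quantities with a controlled loss in $\epsilon$.

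First I would recall the inequality established in the proof of Proposition 2.3, namely
\[
\#(\phi,X,d_t,2\epsilon)\le r_t(\phi,X,d,\epsilon)\le \#(\phi,X,d_t,\tfrac{\epsilon}{2}),
\]
valid for every $t>0$ and every sufficiently small $\epsilon>0$. Dividing through by $t\log\frac{1}{\epsilon}$ and taking $\liminf_{t\to\infty}$ on all three sides, the outer terms become genuine limits by the subadditivity argument of Proposition 2.3, so we obtain
\[
\lim_{t\to\infty}\frac{\log\#(\phi,X,d_t,2\epsilon)}{t\log\frac{1}{\epsilon}}\le \liminf_{t\to\infty}\frac{\log r_t(\phi,X,d,\epsilon)}{t\log\frac{1}{\epsilon}}\le \lim_{t\to\infty}\frac{\log\#(\phi,X,d_t,\frac{\epsilon}{2})}{t\log\frac{1}{\epsilon}}.
\]
The same sandwich holds with $\limsup_{t\to\infty}$ replacing $\liminf_{t\to\infty}$ in the middle.

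Next I would absorb the mismatch between $\log\frac{1}{\epsilon}$ and $\log\frac{1}{2\epsilon}$ (respectively $\log\frac{2}{\epsilon}$). Writing
\[
\frac{\log\#(\phi,X,d_t,2\epsilon)}{t\log\frac{1}{\epsilon}}=\frac{\log\#(\phi,X,d_t,2\epsilon)}{t\log\frac{1}{2\epsilon}}\cdot \frac{\log\frac{1}{2\epsilon}}{\log\frac{1}{\epsilon}},
\]
and observing that $\log\frac{1}{2\epsilon}/\log\frac{1}{\epsilon}\to 1$ as $\epsilon\to 0$ (and similarly for $\epsilon/2$), after the substitutions $\epsilon'=2\epsilon$ on the left and $\epsilon'=\epsilon/2$ on the right I can take $\limsup_{\epsilon\to 0}$ on both ends of the sandwich. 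Both extremes become
\[
\limsup_{\epsilon'\to 0}\lim_{t\to\infty}\frac{\log\#(\phi,X,d_{t},\epsilon')}{t\log\frac{1}{\epsilon'}},
\]
which equals $\over$ by Proposition 2.3. This forces
\[
\limsup_{\epsilon\to 0}\liminf_{t\to\infty}\frac{\log r_t(\phi,X,d,\epsilon)}{t\log\frac{1}{\epsilon}}=\over,
\]
and an identical argument with $\liminf_{\epsilon\to 0}$ in place of $\limsup_{\epsilon\to 0}$ yields the lower version.

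The only place one has to be careful is the order of the two limit operations and the bookkeeping on the denominators $\log\frac{1}{\epsilon}$ versus $\log\frac{1}{2\epsilon}$ or $\log\frac{2}{\epsilon}$; I do not expect a real obstacle, since the existence of the inner limit in $t$ (from subadditivity) is exactly what guarantees that the sandwich survives the switch between $\liminf$ and $\limsup$ in $t$, and the log-denominator ratio tends to $1$ so the outer $\limsup$ (or $\liminf$) in $\epsilon$ is preserved by the dyadic rescaling.
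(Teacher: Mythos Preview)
Your argument is correct and follows essentially the same route as the paper: both proofs hinge on the inequality $\#(\phi,X,d_t,2\epsilon)\le r_t(\phi,X,d,\epsilon)\le \#(\phi,X,d_t,\epsilon/2)$ from Proposition~2.3, then use the existence of $\lim_{t\to\infty}\frac{1}{t}\log\#(\phi,X,d_t,\epsilon)$ (by subadditivity) to pass from $\limsup_t$ to $\liminf_t$, and finally absorb the dyadic shift in $\epsilon$ via $\log\frac{1}{2\epsilon}/\log\frac{1}{\epsilon}\to 1$. The only cosmetic difference is that the paper chains directly as $\limsup_t \frac{\log r_t(\epsilon)}{t}\le \lim_t\frac{\log\#(\epsilon/2)}{t}\le \liminf_t\frac{\log r_t(\epsilon/4)}{t}$, whereas you sandwich $\liminf_t$ between two $\#$-limits; the content is the same.
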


A flow naturally induces a  discrete topological dynamical system $(X,\phi_t),t\in \mathbb{R}.$ The following proposition  is an analogue of  Abramov measure-theoretic entropy formula \cite{a59}, which examines the relationship   between  the metric mean dimension of a flow and its discrete samples. The following  argument is also used  by Li and Cheng \cite[Defintiion 4.1]{cl21} to study the scaled pressure of  fixed-point free  continuous flows. 
\begin{df}\label{def 2.3}
A  continuous flow $(X,\phi)$ is said to be a  uniformly Lipschitz flow if for any $t_0>0$,  there exists $L(t_0)>0$ such that for any  $\epsilon>0$ and $x,y\in X$ with $d(x,y)\leq \frac{\epsilon}{L(t_0)}$, one has $$d(\phi_t(x),\phi_t(y))<\epsilon$$
for all $s\in [0,t_0]$. 
\end{df}

\begin{prop}\label{prop2.4}
Let $(X,\phi)$ be a continuous flow with a metric $d$.  Then for every $\tau\in \mathbb{R_+} \backslash \{0\}$
\begin{align*}
\frac{1}{\tau}\overline{\rm mdim}_M(\phi_{\tau},X, d) &\leq \over,\\
\frac{1}{\tau}\underline{\rm mdim}_M(\phi_{\tau},X, d) &\leq \under,
\end{align*}
and the equalities hold  if $(X,\phi)$ is a  uniformly Lipschitz flow.
\end{prop} 

\begin{proof}
Fix $\tau >0$ and $\epsilon >0$. Choose a subsequence $n_k$ as $k\to \infty$  that converges to $\infty$  and satisfies that
$\lim_{k\to \infty}\frac {\log r_{n_k}(\phi_{\tau}, X, d, \epsilon)}{n_k}=r(\phi_{\tau}, X, d, \epsilon)$.
For every $k$,  choose  a subsequence $t_k$ such that 
$n_k\tau \leq t_k <(n_k+1)\tau.$
Since  $B_{t_k}(x,\epsilon,\phi)\subset B_{n_k}(x,\epsilon,\phi_{\tau})$, we have  $$\frac{r_{n_k}(\phi_{\tau},X,d,\epsilon)}{n_k}\leq  \frac{r_{t_k}(\phi,X,d,\epsilon)}{t_k}\cdot\frac{t_k}{n_k}.$$
Noticing that  $\lim_{k\to \infty}\frac{t_k}{n_k}=\tau$,  we get
$ \frac{r(\phi_{\tau}, X, d, \epsilon)}{\tau}\leq  r(\phi,X,d,\epsilon)$ by letting $k  \to \infty$, which implies the desired results. 

Assuming  that $(X,\phi)$ is a  uniformly Lipschitz flow, then  there is  $L(\tau)>0$ so that $d(\phi_{s}x, \phi_{s}y)<\epsilon$ for all $0\leq s \leq \tau$  if  $x,y \in X$ with $d(x,y)<\frac{\epsilon}{L(\tau)}$.  Hence $B_n(x,\frac{\epsilon}{L(\tau)},\phi_{\tau})\subset B_{n\tau}(x,\epsilon,\phi)$  for every $n\in \mathbb{N}$.  Using the fact that
$r(\phi,X,d,\epsilon)=\limsup_{n\to \infty}\limits\frac{r_{n\tau}(\phi,X,d,\epsilon)}{n\tau},$ 
we obtain that 
$$\frac{r(\phi_{\tau},X,d,\frac{\epsilon}{L(\tau)})}{\tau}\geq  r(\phi,X,d,\epsilon).$$
This completes the proof.
\end{proof}

We  interpret  why  uniformly Lipschitz flow is needed for obtaining the converse inequalities.   Given continuous flow $(X,\phi)$, by the continuity of $\phi$, there exists $0<\delta(\epsilon)<\epsilon$ such that $d(x,y)<\delta(\epsilon)$ so that  $d(\phi_s x,\phi_s y)<\epsilon$ for all $s\in [0,\tau]$. Similarly, we have 
$\frac{r(\phi_{\tau}, X, d,\delta(\epsilon))}{\tau}\geq  r(\phi,X,d,\epsilon).$
According to the definition of metric mean dimension, one  can formulate  the following inequality
$$\frac{\log \frac{1}{\delta(\epsilon)}}{\log \frac{1}{\epsilon}}\cdot \frac{r(\phi_{\tau}, X,d,\delta(\epsilon))}{\tau \log \frac{1}{\delta(\epsilon)}}\geq \frac{r(\phi,X,d,\epsilon)}{\logf}.$$
However, we fail to determine  whether $\limsup_{\epsilon\to 0}\frac{\log \frac{1}{\delta(\epsilon)}}{\log \frac{1}{\epsilon}} =1$ or not.

Recall that the \emph{mean dimension} of a  continuous flow $(X,\phi)$ introduced by Gutman and  Jin \cite{gj20} is given by 
$$\text{mdim}(X,\phi)=\lim_{\epsilon\to 0}\lim_{n\to \infty}\frac{\text{Wdim}_{\epsilon}(X,d_n)}{n},$$
where ${\rm Wdim}_{\epsilon}(X,d_n)$ is  defined by $(d_n,\epsilon)$-embedding mapping. Replacing $d_n$  by  the metric $d_{n,\phi_1}$,  it reduces to the classical notion  $\text{mdim}(X,\phi_1)$ for time one-map  $\phi_1$.  Invoking the classical  Lindenstrauss-Weiss's  inequality  and Proposition 2.4,  we   can  extend  the classical  inequality to continuous flows.  
\begin{cor}
Let $(X,\phi)$ be a continuous flow with a metric $d$. Then 
\begin{align*}
\text{\rm mdim}(X,\phi)\leq \under \leq \over.
\end{align*}

\end{cor}
\begin{proof}
By \cite[Proposition 2.5]{gj20},  we have  $$\text{mdim}(X,\phi)=\text{mdim}(X,\phi_1).$$
Together with the   fact   $\text{mdim}(X,\phi_1)\leq \text{\underline{\rm mdim}}_M(X,\phi_1,d)$ \cite[Theorem 4.2]{lw00} and Proposition 2.4,  we  get the desired result. 

\end{proof} 
\section{Proof of main results}
\subsection{Proof of Theorem  \ref{thm 1.1}}
In this subsection, we   give the proof of  Theorem  \ref{thm 1.1}.

In \cite{yz07}, Ye and Zhang  introduced the  notion of  local entropy function  to study the uniform entropy points and proved  that $$h_{top}(\phi_1,X)=\sup_{x\in X} {h(\phi_1,x)},$$ where  $h_{top}(\phi_1,X)$ and $h(\phi_1,x)$ denote  topological entropy of $X$, the local entropy function at $x$  for time  one map, respectively. We extend this notion to continuous flows to establish an analogous  variational principle for  metric mean dimension. 

Given  $\epsilon>0$ and $x\in X$,  we define the \emph{local $\epsilon$-entropy function  at $x$ with respect to $\phi$} as
$$h_d(x, \epsilon,\phi)=\inf \{r(\phi, K,d,\epsilon): K~\text{is a closed neighborhood of}~x \}.$$

\begin{prop} \label{prop 2.1}
Let $(X,\phi)$ be a continuous flow with a metric $d$.  Suppose that $X$ is a finite union of closed subset  $K_i$, $i=1,...,N$. Then for every $\epsilon>0$, 
$$r(\phi,X,d,\epsilon)=\max_{1 \le j \le N} r(\phi,K_j,d,\epsilon).$$
Consequently,
$\over= \max_{1 \leq j \le N} \limits \overline{\rm mdim}_M(\phi,K_j,d).$   
\end{prop}

\begin{proof}
Fix $\epsilon>0$.  Clearly, one has $r(\phi,X,d,\epsilon)\geq\max_{1 \le j \le N} r(\phi,K_j,d,\epsilon).$  For each $t>0$, there exists $ j_{(\epsilon,t)}\in \{1,...,N\}$ such  that $r_t(\phi,K_{j_{(\epsilon,t)}},d,\epsilon)=$\\
$\max_{1 \le j \le N}$
$ r_t(\phi,K_j,d,\epsilon)$. Choose  a subsequence $t_k$ that converges to $\infty$  as $k \to \infty$ so that  
$$r(\phi,X,d,\epsilon)=\lim_{k\to \infty}\frac{r_{t_k}(\phi,X,d,\epsilon)}{t_k}$$
and
$$r_{t_k}(\phi,K_{j_{(\epsilon)}},d,\epsilon)=\max_{1 \le j \le N} r_{t_k}(\phi,K_j,d,\epsilon)$$
for all $k\in \mathbb{N}$, where  $j_{\epsilon } \in \{1,...,N\}$ only depends on $\epsilon$.
This yields that 
$$r(\phi,X,d,\epsilon)\leq r(\phi,K_{j_{(\epsilon)}},d,\epsilon)\leq \max_{1 \le j \le N} r(\phi,K_j,d,\epsilon).$$ 
\end{proof}

By means of  Proposition \ref{prop 2.1}, we give the proof of Theorem  \ref{thm 1.1}.
\begin{proof}[Proof of Theorem \ref{thm 1.1}]
Fix  $\epsilon >0$. The inequality
$\sup_{x\in X}h_d(x,\epsilon,\phi)\leq r(\phi,X,\epsilon)$ is clear. 

Cover $X$ with the closed ball family $\{\bar{B}_1^1,\bar{B}_2^1,...,\bar{B}_{n_1}^1\}$ whose diameter is  $1$. By  Proposition \ref{prop 2.1},  there exists $1\leq j_1\leq n_1$ such that
$r(\phi,X,d,\epsilon)=r(\phi,\bar{B}_{j_1}^1,d,\epsilon).$
Cover the closed ball $\bar{B}_{j_1}^1$ with closed ball  family $\{\bar{B}_1^2,
\bar{B}_2^2,...,\bar{B}_{n_2}^2\}$, where  the diameter of   every subset  $\bar{B}_i^2$ of $\bar{B}_{j_1}^1$  is  $\frac{1}{2}$. Applying  Proposition \ref{prop 2.1} again,  there exists  $1\leq j_2\leq n_2$ such that 
$r(\phi,X,d,\epsilon)=r(\phi,\bar{B}_{j_2}^2,d,\epsilon).$ Proceeding this procedure,
for every $k\geq 1$  there is a closed ball $\bar{B}_{j_k}^k$ with diameter at most $\frac{1}{k}$ such that
$r(\phi,X,d,\epsilon)=r(\phi,\bar{B}_{j_k}^k,d,\epsilon).$ Set $\mathop\cap_{k\geq 1}\limits\bar{B}_{j_k}^k=\{x_0\}$. Then for any closed neighborhood $K$ of $x_0$ we can choose  sufficiently large $k_0\in \mathbb{N}$ such that 
$\bar{B}_{j_{k_0}}^{{k_0}}\subset K$. So $r(\phi,X,d,\epsilon)\leq r(\phi,K,d,\epsilon)$, which implies that
$$r(\phi,X,d,\epsilon)\leq h_d(x_0,\epsilon,\phi)\leq \sup_{x\in X}h_d(x,\epsilon,\phi).$$
This completes the proof.
\end{proof}

\subsection{Proof of Theorem \ref{thm 1.2}} 
 In this subsection, we  introduce the notion of upper and lower Brin-Katok  $\epsilon$-entropies  of $\phi$, and  give the proof  Theorem  \ref{thm 1.2}.
  
Given $\mu \in \mathcal{M}(X)$ and $\epsilon >0$, we   define \emph{upper and lower Brin-Katok  $\epsilon$-entropies  of $\phi$} as
\begin{align*}
\overline{h}_{\mu}^{BK}(\phi, \epsilon)&=\int \limsup_{t\to \infty}-\frac{\log \mu (B_t(x,\epsilon,\phi))}{t}d\mu,\\
\underline{h}_{\mu}^{BK}(\phi, \epsilon)&=\int \liminf_{t\to \infty}-\frac{\log \mu (B_t(x,\epsilon,\phi))}{t}d\mu.
\end{align*} 

We  first define  Bowen metric mean dimension by means of  Carath\'eodory-Pesin structure \cite{p97}, which allows us to  borrow some tools from geometric measure theory to  obtain the variational relation between metric mean dimension and   Brin-Katok  $\epsilon$-entropy.

\begin{df}\label{df 3.2}
Let  $Z\subset X$ be a non-empty subset, $\epsilon >0, N\in\mathbb{N}$, and $s\geq 0$. Put 
$$M_{N.\epsilon}^{s}(\phi,Z,d)=\inf\{\sum_{i\in I}\limits  e^{-n_i s}\},$$
where the infimum  is taken over all  finite or countable   covers $\{B_{n_i}(x_i,\epsilon,\phi)\}_{i\in I}$ of $Z$ with $n_i \geq N.$

Since $M_{N.\epsilon}^{s}(\phi,Z,d)$ is non-decreasing when $N$ increases, so the limit
$M_{\epsilon}^s(\phi,Z,d)=\lim\limits_{N\to \infty}M_{N.\epsilon}^{s}(\phi,Z,d)$ exists.  There is a critical value  of parameter $s$ for $M_{\epsilon}^s(\phi,Z,d)$  jumping from $\infty$ to $0$, which  is denoted by 
\begin{align*}
h_{top}^B(\phi,Z,d,\epsilon)&=\inf\{s:M_{\epsilon}^s(\phi,Z,d)=0\}\\
&=\sup\{s:M_{\epsilon}^s(\phi,Z,d)=\infty\}.
\end{align*}
We  define Bowen upper metric mean dimension of $\phi$  on the set $Z$ as 
\begin{align*}
\overline{\rm mdim}_M^B(\phi,Z,d)&=\limsup_{\epsilon \to 0}\frac{h_{top}^B(\phi,Z,d,\epsilon)}{\log \frac{1}{\epsilon}}.
\end{align*}
\end{df}
%One can similarly Bowen lower metric mean dimension $\underline{\rm mdim}_M^B(\phi,Z,d)$ of $\phi$  on the set $Z$. Replacing the  ball $B_n(x,\epsilon,\phi)$ by  $B_{n}(x,\epsilon,\phi_1)$ in Definition \ref{df 3.2} we  can define  the quantities  $M(\phi_1,d,Z,s,N,\epsilon),
%M(\phi_1,d,Z,s,\epsilon),$\\$M(\phi_1,d,Z,\epsilon)$  and the Bowen upper  and lower metric mean dimensions  $\overline{\rm mdim}_M^B(\phi_1,Z,d), \underline{\rm mdim}_M^B(\phi_1,Z,d) $ of $\phi_1$ on  $Z$. 

%\begin{prop}\label{prop 2.8}
%Let  $(X,\phi)$ be a uniformly Lipschitz flow with a metric $d$. Then  for any non-empty subset $Z\subset X$
%$$\overline{\rm mdim}_M^B(\phi,Z,d)=\overline{\rm mdim}_M^B(\phi_1,Z,d),\underline{\rm mdim}_M^B(\phi,Z,d)=\underline{\rm mdim}_M^B(\phi_1,Z,d).$$
%\end{prop}

When $Z=X$, we adapt Bowen's approach \cite{b73} to show  metric mean dimensions of  $X$  defined by   Carath\'eodory-Pesin structure and spanning set are equivalent.

\begin{prop} \label{prop 2.9}

 Let  $(X,\phi)$ be a  continuous flow with a metric $d$. Then 
 \begin{align*}
\overline{\rm {mdim}}_{M}(\phi,X,d)&=\overline{\rm mdim}_M^B(\phi,X,d),\\
\underline{\rm {mdim}}_{M}(\phi,X,d)&=\underline{\rm mdim}_M^B(\phi,X,d).
 \end{align*}

\end{prop}
 
\begin{proof}
It  suffices to show the first equality and the second one can be proved  in similar manner.
The inequality $\overline{\rm mdim}_M^B(\phi,X)\leq \overline{\rm {mdim}}_{M}(\phi,X)  $  follows by using the fact every   $(n,\epsilon)$- spanning set  of $X$ with the  minimal cardinality $r_n(\phi,X,d,\epsilon)$  also  covers  $X$.

Fix $\epsilon>0$ and let  $s>h_{top}^B(\phi,Z,d,\epsilon)$. By the  compactness of $X$, there are  $N_0$  and a finite open cover $\{B_{t_i}(x_i,\epsilon,\phi)\}_{i\in I}$ of $X$  with $t_i \geq N_0$ and 
$\sum_{i\in I} {e}^{-st_i}<1$, where $I$ is finite index set. It follows that
\begin{align*}
\sum_{k=1}^{\infty}\sum_{j_1,...,j_k\in I}e^{-s(t_{j_1}+\cdots t_{j_k})}= \sum_{k=1}^{\infty}(\sum_{i\in  I}e^{-st_i})^k <\infty.
\end{align*}

Let $M=\max_{i\in I} t_i$. We  define $c_0=0$ and $c_i=\sum_{s=0}^{k-1}t_{j_k},j_k\in I $ for every $k \geq 1$.  For sufficiently large  $N \geq N_0$,  consider the family
\begin{align*}
\mathcal{F}_N:=\{\mathop\cap_{i=0}^{k}\limits \phi_{-c_i}B_{t_{j_i}}(x_{t_{j_i}},\epsilon,\phi): k\geq 0, 
N\leq  \sum_{i=0}^{k}t_{j_i}<N+M\}.
\end{align*} 
Without loss of generality, assuming that each  $A\in \mathcal{F}_N$ is not empty,  then we choose $x_A\in A $ so that  $A\subset B_N(x_{A},2\epsilon,\phi).$ This implies that 
$X\subset \mathop\cup_{A\in \mathcal{F}_N}\limits B_N(x_{A},2\epsilon,\phi).$  Therefore,
\begin{align*}
r_N(\phi,X,d,2\epsilon)e^{-sN}&\leq \# \mathcal{F}_N\cdot e^{-sN}\\
&\leq e^{sM} \sum_{k\geq0}\sum_{N\leq t_{j_1}+...+t_{j_k}<N+M} e^{-s(t_{j_1}+\cdots t_{j_k})}   <\infty.
\end{align*}

This implies that  $r(\phi,X,d,2\epsilon)\leq s$.  We get $r(\phi,X,d,2\epsilon)\leq h_{top}^B(\phi,Z,d,\epsilon)$ after  letting $s \to h_{top}^B(\phi,Z,d,\epsilon)$.
\end{proof}

\begin{df}
Let $f: X \rightarrow \mathbb{R}$ be a bounded function, and let  $N\in \mathbb{N}$, $\epsilon>0,  s \geq 0$.  Put
$$W_{N,\epsilon}^s(\phi,f,d)=\inf\{\sum_{i\in I}\limits  c_ie^{-n_i s}\},$$
where the infimum  is taken over all  finite or countable families  $\{B_{n_i}(x_i,\epsilon,\phi)\}_{i\in I}$ with  $0<c_i<\infty$, $x_i\in X$  and  $n_i\geq N$,  so that 
$\sum_{i\in I}\limits c_i\chi_{B_{n_i}(x_i,\epsilon,\phi)}\geq f.$ 

Let $Z\subset X$ be a non-empty subset. Set $W_{N,\epsilon}^s(\phi,Z,d):=W_{N,\epsilon}^s(\phi,\chi_Z,d)$  and $W_{\epsilon}^s(\phi,Z,d)=\lim\limits_{N\to \infty}W_{N,\epsilon}^s(\phi,Z,d).$
There is a critical value  of parameter $s$  for $W_{\epsilon}^s(\phi,Z,d)$  jumping from $\infty$ to $0$. 
\begin{align*}
h_{top}^{WB}(\phi,Z,d,\epsilon):&=\inf\{s:W_{\epsilon}^s(\phi,Z,d)=0\}\\
&=\sup\{s:W_{\epsilon}^s(\phi,Z,d)=\infty\}.
\end{align*}

We define  weighted Bowen  upper and  lower metric mean dimension of $\phi$  on the set $Z$ as 
\begin{align*}
\overline{\rm mdim}_M^{WB}(\phi,Z,d)&=\limsup_{\epsilon \to 0}\frac{h_{top}^{WB}(\phi,Z,d,\epsilon)}{\log \frac{1}{\epsilon}},\\
\underline{\rm mdim}_M^{WB}(\phi,Z,d)&=\liminf_{\epsilon \to 0}\frac{h_{top}^{WB}(\phi,Z,d,\epsilon)}{\log \frac{1}{\epsilon}}.
\end{align*}

\end{df}

Analogous to \cite[Proposition 3.2]{fh12},   one can   show that    weighted Bowen metric mean dimension  coincides with Bowen metric mean dimension.

\begin{prop}\label{prop 2.11}
Let $Z\subset X$ be a non-empty subset.  Then for any $s\geq 0, \epsilon >0, \delta >0$ and $N\in \mathbb{N}$, 
$$M_{N,6\epsilon}^{s+\delta}(\phi,Z,d)\leq W_{N,\epsilon}^s(\phi,Z,d)\leq M_{N,\epsilon}^s(\phi,Z,d).$$
Consequently,
\begin{align*}
\overline{\rm{mdim}}_M^B(\phi,Z,d)&=\overline{\rm mdim}_M^{WB}(\phi,Z,d),\\
\underline{\rm{mdim}}_M^B(\phi,Z,d)&=\underline{\rm mdim}_M^{WB}(\phi,Z,d).
\end{align*} 
\end{prop}

Using weighted Bowen metric mean  dimension, inspired by \cite[Lemma 3.4]{fh12} we can  define a positive  and bounded  linear functional to  produce a Borel probability measures by Riesz  representation theorem. The  following  is the  Frostman's lemma for continuous flows.

\begin{lem}\label{lem 2.12}
Let $K$ be a non-empty compact subset of $X$ and $ s \geq0, \epsilon >0, N\in \mathbb{N}$. Set $c:=W_{N,\epsilon}^s(\phi,K,d)>0$. Then there exists a Borel probability measure $\mu \in M(X)$ such that $\mu (K)=1$ and 
for any $x\in X, n\geq N$, 
$$\mu(B_n(x,\epsilon,\phi))\leq \frac{1}{c}e^{-s n}.$$

\end{lem}

The following  theorem gives  a variational principle for  Bowen metric mean dimension  on compact subsets in the context of infinite entropy, which is an analogue of variational principle for Bowen topological entropy\cite{fh12}.
\begin{thm}\label{thm 2.13}
Let $(X,\phi)$ be a continuous flow  with a metric $d$ and $K$ be a non-empty compact subset of $X$. Then  
\begin{align*}
\overline{\rm mdim}_M^B(\phi,K, d)
&=\limsup_{\epsilon \to 0}\frac{1}{\log \frac{1}{\epsilon}} \sup\{\underline{h}_\mu^{BK}(\phi,\epsilon):\mu \in \mathcal{M}(X),\mu(K)=1\},\\
\underline{\rm mdim}_M^B(\phi,K, d)&=\liminf_{\epsilon \to 0}\frac{1}{\log \frac{1}{\epsilon}} \sup\{\underline{h}_\mu^{BK}(\phi,\epsilon):\mu \in \mathcal{M}(X),\mu(K)=1\}.
\end{align*}
\end{thm}
\begin{proof}
Fix $\epsilon >0$ and  $\mu \in \mathcal{M}(X)$ with  $\mu (K)=1$. Assume that $\underline{h}_\mu^{BK}(\phi,\epsilon)>0$ and let $s<\underline{h}_\mu^{BK}(\phi,\epsilon)$.   By a standard method, one  can  find  a Borel set $E\subset K$  and  $N \in \mathbb{N}$ such that $\mu(E)>0$ and $$-\frac{\log\mu(B_n(x,\epsilon,\phi))}{n}>s$$ for any $x\in E$ and $n\geq N$.  Let $\{B_{n_i}(x_i,\frac{\epsilon}{2},\phi)\}_{i\in I}$ be a   finite  or countable  cover of $E$  with $n_i \geq N$. We  assume that  $B_{n_i}(x_i,\frac{\epsilon}{2},\phi)\cap E\not= \emptyset$ for every $i\in I$. 
Choose $y_i \in  B_{n_i}(x_i,\frac{\epsilon}{2},\phi)\cap E$ for each $i \in I$ so that  $\cup_{i\in I}B_{n_i}(y_i,\epsilon,\phi)\supseteq E$. This yields that 
\begin{align*}
\sum_{i\in  I}e^{-sn_i}\geq \sum_{i\in  I}\mu(B_{n_i}(y_i,\epsilon,\phi))\geq \mu(E)>0.
\end{align*}
Therefore, $ h_{top}^B(\phi,K,d,\frac{\epsilon}{2})\geq h_{top}^B(\phi,E,d,\frac{\epsilon}{2})\geq s$. Letting $s \to \underline{h}_\mu^{BK}(\phi,\epsilon)$, we obtain that $\underline{h}_\mu^{BK}(\phi,\epsilon)\leq h_{top}^B(\phi,K,d,\frac{\epsilon}{2})$ for every $\mu \in  \mathcal{M}(X)$ with $\mu(K)=1$.

On  the other hand, let 
$h_{top}^{WB}(\phi,K,d,\epsilon)>0$ and $s<h_{top}^{WB}(\phi,K,d,\epsilon)$. Then there exists $N_0$ such that  $c:= W_{N_0,\epsilon}^s(\phi,K,d)>0$. By Lemma \ref{lem  2.12},   there is  a $\mu \in \mathcal{M}(X)$ so that $\mu(K)=1$ and 
$$\mu(B_n(x,\epsilon,\phi))\leq \frac{1}{c}e^{- ns}$$
for any $x\in K$ and $n\geq N_0$.
It follows that $\underline{h}_\mu^{BK}(\phi,\epsilon)\geq s$. Letting $s\to h_{top}^{WB}(\phi,K,d,\epsilon)$ and  then using  Proposition \ref{prop 2.11},  we know that  $$h_{top}^{B}(\phi,K,d,6\epsilon)\leq \sup\{\underline{h}_\mu^{BK}(\phi,\epsilon):\mu \in \mathcal{M}(X), \mu(K)=1\}.$$
This completes the proof.
\end{proof}

Combining  Theorem \ref{thm 2.13}, we   give the proof of  Theorem  \ref{thm 1.2}.
\begin{proof}[Proof of Theorem \ref{thm 1.2}]
Fix $\epsilon >0$ and $\mu \in \mathcal{M}(X)$ with  $\mu (K)=1$.   Let $s<\overline{h}_\mu^{BK}(\phi,\epsilon)$. By a standard method, there exists  a Borel  subset $E$  of $X$ such that $\mu(E)>0$ and  $ \limsup_{n\to \infty}\limits-\frac{\log \mu (B_n(x,\epsilon,\phi))}{n}>s$ for any  $x\in E$. Put  $E_n=\{x\in E: \mu(B_n(x,\epsilon,\phi)) < e^{-ns}\}$. Then $E=\cup_{n\geq N}E_n$ for every $N\geq 1$. Hence, one can choose $n\geq N$ (depending on $N$) so that $\mu(E_n)\geq \frac{1}{n(n+1)}\mu(E)$ for every $N\geq 1$.  Let  $F_n \subset E_n$ be an 
 $(n,\epsilon)$-separated set  of $E_n$ with the  largest cardinality $s_n(\phi,E_n,d,\epsilon)$. Then
 $$0<\mu(E_n)\leq \sum_{x\in F_n}\mu(B_n(x,\epsilon,\phi))\leq s_n(\phi,E_n,d,\epsilon)\cdot e^{-ns},$$
which shows $s_n(\phi,X,d,\epsilon)\geq  \mu(E_n) e^{ns}$.   So $s\leq s(\phi,X,d,\epsilon)$. Letting $s \to \overline{h}_\mu^{BK}(\phi,\epsilon)$,  we have   $\overline{h}_\mu^{BK}(\phi,\epsilon)\leq  s(\phi,X,d,\epsilon).$ Together with  Proposition \ref{prop 2.9} and Theorem  \ref{thm 2.13},   one has Theorem \ref{thm 1.2}.

\end{proof}

\subsection{Proof of Theorem 1.3}
In this subsection,  we recall that the definitions of  \emph{Kolmogorov-Sinai $\epsilon$-entropy} \cite{gs20}  and \emph{Katok's entropy} \cite{k80} and  give the proof  of  Theorem  \ref{thm 1.3}. Throughout this section, $L:=L(1)$ is given as Definition \ref{def 2.3}. 
\begin{itemize}[leftmargin = 6pt]
\item  \textbf{Kolmogorov-Sinai $\epsilon$-entropy} 
Given a finite (Borel) measurable  partition  $P$ of $X$, we denote by ${\diam}P=\sup_{A\in P}{{\diam}(A,d)}$ the \emph{diameter} of $P$.   Let $t\in \mathbb{R}$,  $\mu \in \mathcal{M}_{\phi}(X)$. Then $\mu$ is also $\phi_t$-invariant.   For time $t$-map,  let $h_\mu(\phi_t)$ denote the measure-theoretic entropy of $P$ w.r.t. $\mu$ \cite{w82}.  

Given $\epsilon>0$,   we  define 
\emph{Kolmogorov-Sinai $\epsilon$-entropy of $\mu$ w.r.t. $\phi_1$} as
$$\inf_{\diam (P) \leq \epsilon}h_\mu(\phi_1, P),$$
where  the infimum is taken over all finite Borel partitions  of $X$ with diameter at most $\epsilon$.
\item \textbf{Katok's  $\epsilon$-entropy}
\end{itemize}

Let $\mu \in  \mathcal{M}(X)$, $\epsilon>0$, $t>0$ and  $\delta \in (0,1)$.
Put
$$R_\mu^\delta(\phi, t, \epsilon)=\min\{\#E: E\subset X ~\text{and} ~\mu (\cup_{x\in E}B_t(x,\epsilon,\phi))> \delta\}.$$
We define \emph{the upper and lower  Katok' $\epsilon$ entropies of $\mu$ with respect to $\phi$} as 
\begin{align*}
\overline{h}_{\mu}^K(\phi,\epsilon, \delta)&=\limsup_{t\to \infty} \frac{1}{t} \log R_\mu^\delta(\phi, t, \epsilon),\\
\underline{h}_{\mu}^K(\phi,\epsilon, \delta)&=\liminf_{t\to \infty} \frac{1}{t} \log R_\mu^\delta(\phi, t, \epsilon). 
\end{align*}
For time one map, one can  similarly define  the quantities $R_\mu^\delta(\phi_1, n, \epsilon)$,
$\overline{h}_{\mu}^K(\phi_1,\epsilon, \delta), 
\underline{h}_{\mu}^K(\phi_1,\epsilon, \delta)$.

Finally, we give the proof of Theorem  \ref{thm 1.3}.

\begin{proof}[Proof of Theorem \ref{thm 1.3}]
It suffices to show 
\begin{align*}
\over&=\limsup_{\epsilon \to 0}\frac{1}{\logf}\sup_{\mu \in \mathcal{E}_{\phi}(X)}F(\mu,\epsilon).
\end{align*}
For uniformly  Lipschitz flow,  we have  $\over=\ov$ by  Proposition \ref{prop2.4}. We  divide the proof  into  three  steps.

\text{\textbf{Step 1}} We show 
\begin{align*}
\over&=\limsup_{\epsilon \to 0}\frac{1}{\logf}\sup_{\mu \in \mathcal{M}_{\phi}(X)}\inf_{\diam (P) \leq \epsilon}\limits h_\mu(\phi_1,P),\\
&=\limsup_{\epsilon \to 0}\frac{1}{\logf}\sup_{\mu \in \mathcal{E}_{\phi}(X)}\inf_{\diam (P) \leq \epsilon}\limits h_\mu(\phi_1,P).
\end{align*}

One has
\begin{align*}
\over&=\ov\\
&=\limsup_{\epsilon \to 0}\frac{1}{\logf}\sup_{\mu \in \mathcal{M}_{\phi_1}(X)}\inf_{\diam (P) \leq \epsilon}h_\mu(\phi_1,P), \\
&\geq \limsup_{\epsilon \to 0}\frac{1}{\logf}\sup_{\mu \in \mathcal{M}_{\phi}(X)}\inf_{\diam (P) \leq \epsilon}h_\mu(\phi_1,P),\\
& \geq\limsup_{\epsilon \to 0}\frac{1}{\logf}\sup_{\mu \in \mathcal{E}_{\phi}(X)}\inf_{\diam (P) \leq \epsilon}h_\mu(\phi_1,P),
\end{align*}
where the second $``="$ holds by \cite[Theorem 3.1]{gs20}.

Let $\ergodic$. For every $t\in \mathbb{R}$,  we define
\begin{align*}
\mu_t(B):=\mu(\phi_t(B)),
m(B):=\int_{0}^{1} \mu_t(B)dt,
\end{align*}
where  $B$ is a Borel measurable set. Then we have  $\mu_t\in \mathcal{E}_{\phi_1}(X)$ and $m\in\mathcal{E}_{\phi}(X) $. Since  $\phi_{-t}$ is an  invertible measure-preserving transformation  between  $(X,\mu_t,\phi_1)$ and $(X,\mu,\phi_1)$, then 
$h_\mu(\phi_1,\phi_t(\xi))=h_{\mu_t}(\phi_1,\xi)$ holds for any  finite measurable  partition $\xi$. Choose   a  finite  Borel partition $\xi$  of $X$ with diameter  less than $\epsilon/L$.  Recall that $L=L(1)$.  Notice that $-x\log x$ is a  concave function.  By Jensen's inequality,  one has 
$$h_m(\phi_1,\xi)\geq \int_{0}^{1}h_{\mu_t}(\phi_1,\xi)dt.$$
So there is a $t_0\in [0,1]$ so that
$$h_m(\phi_1,\xi) \geq h_{\mu_{t_0}}(\phi_1,\xi)=h_\mu(\phi_1,\phi_{t_0}\xi)\geq \inf_{\diam (P) \leq \epsilon}h_\mu(\phi_1,P ).$$
It follows that
$$ \sup_{\mu\in \mathcal{E}_{\phi_1}(X)}\inf_{\diam (P) \leq \epsilon}\limits h_\mu(\phi_1,P)\leq \sup_{m\in \mathcal{E}_{\phi}(X)}\limits \inf_{\diam \xi \leq \epsilon/L}\limits h_m(\phi_1,\xi).$$
Together with \cite[Remark 3.6]{gs20},  one  has  $$\over\leq \limsup_{\epsilon \to 0}\frac{1}{\logf}\sup_{\mu \in \mathcal{E}_{\phi}(X)}\inf_{\diam (P) \leq \epsilon}h_\mu(\phi_1,P).$$ 
\text{\textbf{Step 2}} We show  that for every $\delta \in (0,1)$, 
\begin{align*}
\over&=\limsup_{\epsilon \to 0}\frac{1}{\logf}\sup_{\mu \in \mathcal{M}_{\phi}(X)}\overline{h}_{\mu}^K(\phi_1,\epsilon, \delta),\\
&=\limsup_{\epsilon \to 0}\frac{1}{\logf}\sup_{\mu \in \mathcal{E}_{\phi}(X)}\overline{h}_{\mu}^K(\phi_1,\epsilon, \delta).
\end{align*} 

We  only prove the equalities for $\overline{h}_{\mu}^K(\phi_1,\epsilon, \delta)$   since similar method is valid for $\underline{h}_{\mu}^K(\phi_1,\epsilon, \delta)$. Fix $\delta \in (0,1)$ and let  $\epsilon >0$.   Using the  fact  that   $R_\mu ^\delta (\phi_1,n,\epsilon) \leq r_n(\phi_1,X,d,\epsilon)$  for every  $\i$  and $n \in\mathbb{N}$, one has 
\begin{align*}
\overline{\rm mdim}_M (\phi, X, d )
&\geq  \limsup_{\epsilon\to 0} \frac{1}{\log \frac{1}{\epsilon}} \sup_{\mu\in \mathcal{M}_{\phi}(X)} \overline{h}_{\mu}^K(\phi_1,\epsilon, \delta)\\
&\geq \limsup_{\epsilon\to 0} \frac{1}{\log \frac{1}{\epsilon}} \sup_{\mu\in \mathcal{E}_{\phi}(X)} \overline{h}_{\mu}^K(\phi_1,\epsilon, \delta).
\end{align*}

Again, let $\ergodic$. For every $t\in \mathbb{R}$,  we define
\begin{align*}
\mu_t(B):=\mu(\phi_t(B)),
m(B):=\int_{0}^{1} \mu_t(B)dt,
\end{align*}
where  $B$ is a Borel measurable set. Then  we have $\mu_t\in \mathcal{E}_{\phi_1}(X)$ and $m\in\mathcal{E}_{\phi}(X) $.  Let $E_n$ be a subset of $X$  so that  $\#E_n= R_{m}^{\delta}(\phi,n+1,\epsilon)$. Then 
$$ m(\cup_{x\in E_n} B_{n+1}(x,\epsilon,\phi))=\int_0^1 \mu_t(\cup_{x\in E_n}B_{n+1}(x,\epsilon,\phi))dt >\delta.$$
Choose  a $t_0\in [0,1]$ so that 
$$\mu_{t_0}(\cup_{x\in E_n}B_{n+1}(x,\epsilon,\phi))=\mu(\cup_{x\in E_n} \phi_{t_0}B_{n+1}(x,\epsilon,\phi))>\delta.$$
Since  $\phi_{t_0}\left(B_{n+1}(x,\epsilon,\phi)\right)\subset B_n(\phi_{t_0}x,\epsilon,\phi_1)$, then 
$\mu(\cup_{x\in E_n}B_n(\phi_{t_0}x,\epsilon,\phi_1))>\delta.$  So  $R_{\mu}^{\delta}(\phi_1,n,\epsilon)\leq R_{m}^{\delta}(\phi, n+1,\epsilon)$ for every $n\in \mathbb{N}$. This  yields that 
\begin{align*}
\overline{h}_{\mu}^K(\phi_1,\epsilon,\delta)\leq
 \overline{h}_{m}^K(\phi,\epsilon,\delta)&\leq \sup_{m\in \mathcal{E}_{\phi}(X)}\overline{h}_{m}^K(\phi,\epsilon,\delta)
 \leq \sup_{m\in \mathcal{E}_{\phi}(X)}\overline{h}_{m}^K(\phi_1,\epsilon/L,\delta).
\end{align*}
By \cite[Proposition 6.3]{shi}, we obtain 
$$\overline{\rm mdim}_M (\phi, X, d )\leq  \limsup_{\epsilon\to 0} \frac{1}{\log \frac{1}{\epsilon}} \sup_{\mu\in \mathcal{E}_{\phi}(X)} \overline{h}_{\mu}^K(\phi_1,\epsilon, \delta).$$

\text{\textbf{Step 3}} We show
\begin{align*}
\over&=\limsup_{\epsilon \to 0}\frac{1}{\logf}\sup_{\mu \in \mathcal{M}_{\phi}(X)}\overline{h}_\mu^{BK}(\phi_1,\epsilon),\\
&=\limsup_{\epsilon \to 0}\frac{1}{\logf}\sup_{\mu \in \mathcal{E}_{\phi}(X)}\overline{h}_\mu^{BK}(\phi_1,\epsilon).
\end{align*} 
  By Theorem \ref{thm 1.2}, we have 
\begin{align*}
\over
&\geq\limsup_{\epsilon \to 0}\frac{1}{\logf}\sup_{\mu \in \mathcal{M}_{\phi}(X)} \overline{h}_\mu^{BK}(\phi,\epsilon),\\
&\geq\limsup_{\epsilon \to 0}\frac{1}{\logf}\sup_{\mu \in \mathcal{M}_{\phi}(X)} \overline{h}_\mu^{BK}(\phi_1,\epsilon),\\
&\geq\limsup_{\epsilon \to 0}\frac{1}{\logf}\sup_{\mu \in \mathcal{E}_{\phi}(X)} \overline{h}_\mu^{BK}(\phi_1,\epsilon).
\end{align*}

Fix $\epsilon>0$ and $\e$. Notice that for any  $s,t\geq0$ and $x\in X$, one has $\phi_s(B_{t+s}(x,\epsilon,\phi)) \subset  B_{t}(\phi_sx,\epsilon,\phi)$. Then the ergodicity of $\mu$ implies that  $\overline{h}_\mu^{BK}(\phi,x,\epsilon)$ is a constant for $\mu$-a.e.$x\in X$,  and this constant is exactly equals to $\overline{h}_\mu^{BK}(\phi,\epsilon)$.   Let  $\delta >0$ and $s>\overline{h}_\mu^{BK}(\phi,\epsilon)$. Put 
$$E_n:=\{x\in X:  \mu(B_k(x,\epsilon,\phi))>e^{-ks}, \forall  k\geq n\}.$$
Then $\mu(\cup_{n\geq 1} E_n)=1$.  By the  continuity of $\mu$, choose $n_0$ so  that  $\mu(E_{n_0})>\delta$ for any $n\geq n_0$ . Let   $n\geq n_0$ and  $F_n \subset E_n$ be a   $(n, 2\epsilon)$-separated set of $E_n$ with  the largest cardinality. Then the open  ball family  $\{B_n(x,\epsilon,\phi), x\in F_n\}$  are pairwise disjoint. This yields that 
$$1\geq \mu(\cup_{x\in F_n}B_n(x,\epsilon,\phi))=\sum_{x\in F_n}\mu (B_n(x,\epsilon,\phi))\geq \# F_n\cdot e^{-ns}.$$
So  $R_\mu ^\delta (\phi,n,2\epsilon)\leq e^{ns}$  for any $n\geq N_0$. Consequently, 
$$\overline h_\mu^K (\phi_1,2\epsilon,\delta)\leq \overline h_\mu^K (\phi, 2\epsilon,\delta)\leq  \overline{h}_\mu^{BK}(\phi,\epsilon)\leq  \overline{h}_\mu^{BK}(\phi_1,\epsilon/L).$$ Using Step 2,  we get 
$$\overline{\rm mdim}_M (\phi, X, d )
\leq \limsup_{\epsilon\to 0} \frac{1}{\log \frac{1}{\epsilon}} \sup_{\mu\in \mathcal{E}_{\phi}(X)}  \overline{h}_\mu^{BK}(\phi_1,\epsilon).$$
\end{proof}

%%%%%%%%%%%%%%%%%%%%%%%%%%%%%%%%
Finally, we finish this paper with three questions as follows.

\text{\bf Question 1}~~Is it possible that the  $"\leq"$ can be strict in Proposition  \ref{prop2.4}?   

\text{\bf Question 2}~~ Does there exist the  metric $d$ compatible with the topology of  $X$ such that $\text{mdim}(X,\phi)= \over$?

\text{\textbf{Question 3}}  Can the condition of uniformly Lipschitz flow  be removed in Theorem \ref{thm 1.3}?

\section*{Acknowledgement} 

\noindent The work was supported by the
National Natural Science Foundation of China (Nos.12071222 and 11971236). The work was also funded by the Priority Academic Program Development of Jiangsu Higher Education Institutions.  We would like to express our gratitude to Tianyuan Mathematical Center in Southwest China(11826102), Sichuan University and Southwest Jiaotong University for their support and hospitality.

%\bibliographystyle{alpha}
%\bibliography{universal_bib}

\end{document}